\title{On the cells in a stationary Poisson hyperplane mosaic}
\author{Matthias Reitzner and Rolf Schneider}
\date{}
\newcommand{\Sd}{{\mathbb S}^{d-1}}
\newcommand{\Kd}{{\mathcal K}^d}
\newcommand{\bP}{{\mathbb P}}
\newcommand{\Rd}{{\mathbb R}^d}
\newcommand{\N}{{\mathbb N}}
\newcommand{\Ha}{\mathcal{H}}
\newcommand{\B}{\mathcal{B}}
\newcommand{\D}{{\rm d}}
\newcommand{\bE}{{\mathbb E}\,}
\newcommand{\bQ}{{\mathbb Q}}
\newtheorem{theorem}{Theorem}
\newtheorem{lemma}{Lemma}
\begin{document}
\maketitle

\begin{abstract}
Let $X$ be the mosaic generated by a stationary Poisson hyperplane process $\widehat X$ in $\Rd$. Under some mild conditions on the spherical directional distribution of $\widehat X$ (which are satisfied, for example, if the process is isotropic), we show that with probability one the set of cells ($d$-polytopes) of $X$ has the following properties. The translates of the cells are dense in the space of convex bodies. Every combinatorial type of simple $d$-polytopes is realized infinitely often by the cells of $X$. A further result concerns the distribution of the typical cell. \\[2mm]
2010 Mathematics Subject Classification. Primary 60D05, Secondary 51M20, 52C22
\end{abstract}

\section{Introduction}\label{sec1}

Consider a system ${\mathcal H}$ of hyperplanes in Euclidean space $\Rd$ which is locally finite, that is, every bounded subset of $\Rd$ is intersected by only finitely many hyperplanes from $\Ha$. The components of $\Rd\setminus\bigcup_{H\in\Ha} H$ are open convex polyhedra. Their closures are called {\em cells}, and the set of all cells is the {\em mosaic} induced by $\Ha$. A random process $\widehat X$ of hyperplanes in $\Rd$ induces a random mosaic $X$. If $\widehat X$ is stationary (its distribution is invariant under translations), then also the mosaic $X$ is stationary. Under some precautions, its cells are bounded and thus convex polytopes. The shapes of the cells in such a mosaic depend, of course, on the directions of the hyperplanes in $\widehat X$. For example, if $\widehat X$ is a parallel process, which means that its hyperplanes have only $d$ different directions (which are linearly independent), then all cells are parallelepipeds. On the other hand, if the hyperplane process $\widehat X$ is 
isotropic (its distribution is also invariant under rotations), then an inspection of some simulated examples will lead to the impression that the shapes can be quite varying and general. The purpose of this note is to substantiate this impression in the case of Poisson hyperplane processes. Due to the strong independence properties of Poisson processes, the variability of the shapes of the induced cells shows some extreme and perhaps unexpected features.

We make the following assumptions (for explanations, see the next section). We are given a stationary Poisson hyperplane process $\widehat X$ in $\Rd$, with a locally finite intensity measure $\widehat\Theta\not\equiv 0$. Let $\widehat\varphi$ be its spherical directional distribution. This is an even Borel measure on the unit sphere $\Sd$, which controls the directions of the hyperplanes appearing in $\widehat X$.\\[1mm]
{\bf Assumption (A):} The support of the spherical directional distribution $\widehat\varphi$ is the whole unit sphere $\Sd$. \\[1mm]
{\bf Assumption (B):} The spherical directional distribution $\widehat\varphi$ assigns measure zero to each great subsphere of $\Sd$.

Both assumptions are satisfied, for example, if $\widehat X$ is isotropic; in that case, the spherical directional distribution is the normalized spherical Lebesgue measure.

As mentioned, the random mosaic induced by $\widehat X$ is denoted by $X$. By $\Kd$ we denote the space of convex bodies (nonempty, compact, convex subsets) of $\Rd$. Its topology is induced by the Hausdorff metric $\delta$. The polytopes form a dense subset of $\Kd$.

\begin{theorem}\label{Thm1}
If assumptions $\rm (A)$ and $\rm (B)$ are satisfied, then with probability one, the set of all translates of the cells of $X$ is dense in $\Kd$.
\end{theorem}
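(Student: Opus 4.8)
The plan is to reduce the statement, via a countable approximation and a zero--one law, to the claim that with positive probability a single cell of $X$ is Hausdorff-close to a translate of a prescribed polytope, and then to realise such a cell by an explicit local Poisson configuration. Fix a countable dense family $\{P_i\}_{i\in\N}$ of polytopes in $\Kd$ (for instance, the polytopes with rational vertices), and for $i,k\in\N$ let
\[
A_{i,k}:=\{\text{some cell }C\text{ of }X\text{ has a translate }C+t\text{ with }\delta(C+t,P_i)<1/k\}.
\]
If each $A_{i,k}$ has probability one, then so does $\bigcap_{i,k}A_{i,k}$, and on this event the translates of the cells of $X$ are dense in $\Kd$: an arbitrary $K\in\Kd$ is approximated first by some $P_i$ and then $P_i$ by a translate of a cell. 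Each $A_{i,k}$ is invariant under translations of $\widehat X$, since a translation by $v$ replaces every cell $C$ by $C+v$ and hence leaves the family of cells unchanged modulo translations. Because a stationary Poisson process is mixing, hence ergodic, under the translation group of $\Rd$, every translation-invariant event has probability $0$ or $1$; so it remains to show $\bP(A_{i,k})>0$ for all $i,k$.

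Fix $P=P_i$ and $\varepsilon=1/k$. After a translation we may assume $0\in\inn P$, and we take the irredundant representation $P=\bigcap_{j=1}^N\{x:\langle x,u_j\rangle\le\tau_j\}$ with pairwise distinct unit normals $u_j$ and $\tau_j>0$; write $H_j:=\{x:\langle x,u_j\rangle=\tau_j\}$. Since $P$ is bounded with nonempty interior, small simultaneous perturbations of the $H_j$ yield intersections that are convex bodies depending continuously (in the Hausdorff metric) on the perturbation; hence there are $\delta>0$ and $\rho>0$ such that, whenever unit vectors $v_j$ and reals $s_j$ satisfy $|v_j-u_j|<\delta$ and $|s_j-\tau_j|<\delta$ for $j=1,\dots,N$, the polyhedron $Q:=\bigcap_{j=1}^N\{x:\langle x,v_j\rangle\le s_j\}$ is a convex body with $0\in\inn Q$, $Q\subset\inn B(0,\rho)$ and $\delta(Q,P)<\varepsilon$; and the hyperplane sets $U_j:=\{H(v,s):|v-u_j|<\delta,\ |s-\tau_j|<\delta\}$, with $H(v,s):=\{x:\langle x,v\rangle=s\}$, are pairwise disjoint (the $H_j$ being pairwise distinct) and each is contained in $\Ha_\rho:=\{H:H\cap B(0,\rho)\ne\emptyset\}$.

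Let $E$ be the event that $\widehat X$ has exactly one hyperplane in each $U_j$ and none in $\Ha_\rho\setminus\bigcup_jU_j$. On $E$, the hyperplanes of $\widehat X$ meeting $B(0,\rho)$ are precisely the $N$ hyperplanes $H(v_j,s_j)\in U_j$; since $0\in\inn Q\subset\inn B(0,\rho)$ and $\inn Q$ is a connected open set meeting no hyperplane of $\widehat X$, whereas every path leaving $Q$ crosses some $H(v_j,s_j)$, the cell of $X$ containing $0$ equals $Q$, so $E\subset A_{i,k}$. Restricting $\widehat X$ to $\Ha_\rho$, which has finite $\widehat\Theta$-measure, and using the independence of a Poisson process over the disjoint sets $U_1,\dots,U_N,\,\Ha_\rho\setminus\bigcup_jU_j$, we get
\[
\bP(E)=e^{-\widehat\Theta(\Ha_\rho\setminus\bigcup_jU_j)}\prod_{j=1}^N\widehat\Theta(U_j)\,e^{-\widehat\Theta(U_j)},
\]
which is positive because, in the standard decomposition of $\widehat\Theta$ into the directional distribution and the signed distance, $\widehat\Theta(U_j)$ is a positive multiple of $\widehat\varphi(\{u\in\Sd:|u-u_j|<\delta\})$, and the latter is positive by Assumption (A). Hence $\bP(A_{i,k})\ge\bP(E)>0$, and the zero--one law finishes the proof. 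I expect the main obstacle to be the continuity/compactness step in the previous paragraph, namely guaranteeing that every sufficiently small simultaneous perturbation of the facet hyperplanes of $P$ keeps their intersection bounded and Hausdorff-close to $P$, together with the verification that on $E$ the set $Q$ is genuinely a cell; after this, the Poisson computation and the ergodicity argument are routine. Note that only Assumption (A) is used in this argument.
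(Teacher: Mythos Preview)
Your argument is correct and follows a genuinely different route from the paper. Both proofs reduce to a countable dense family of polytopes and both establish, using only Assumption (A), that the local event ``some cell is $\varepsilon$-close to $P$'' has positive probability (this is the paper's inequality $p(\varepsilon)>0$ in (3.0), and your computation of $\bP(E)$). The divergence is in how one upgrades positive probability to probability one. The paper proves the stronger Lemma~\ref{Lem3.3} (infinitely many cells have translates $\varepsilon$-close to $P$) via the Erd\H{o}s--R\'enyi Borel--Cantelli lemma (Lemma~\ref{Lem3.1}); to control the covariances $\bP(E_i\cap E_k)-\bP(E_i)\bP(E_k)$ one must show that $\widehat\Theta(\Ha_i\cap\Ha_k)\to 0$ as the copies $P_i,P_k$ are pushed apart along a suitable line, and this is exactly where Assumption (B) is used. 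You instead invoke the zero--one law for translation-invariant events, so that $\bP(A_{i,k})>0$ already forces $\bP(A_{i,k})=1$. This is shorter and, as you note, dispenses with Assumption (B). What the paper's approach buys is Lemma~\ref{Lem3.3} itself, whose ``infinitely many'' conclusion is reused verbatim for Theorem~\ref{Thm2}; your ergodicity shortcut gives only one close cell and does not directly yield Theorem~\ref{Thm2}.

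One small correction: your assertion that a stationary Poisson hyperplane process is \emph{mixing} is too strong without (B). If $\widehat\varphi(t_0^\perp\cap\Sd)>0$ for some direction $t_0$, then $\widehat\Theta\big(\Ha(B(0,1))\cap\Ha(B(t,1))\big)$ does not tend to $0$ as $t\to\infty$ along $\R t_0$, so mixing fails. What does hold in general, and is all you need, is \emph{ergodicity}: the spatial averages $\lambda(B_r)^{-1}\int_{B_r}\widehat\Theta\big(S_1\cap(S_2+t)\big)\,dt$ tend to $0$ for any $S_1,S_2$ of finite $\widehat\Theta$-measure, since the set $\{t:\widehat\Theta(S_1\cap(S_2+t))>\alpha\}$ is contained in a slab of bounded width for each $\alpha>0$. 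You should cite ergodicity rather than mixing.
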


\begin{theorem}\label{Thm2}
If assumptions $\rm (A)$ and $\rm (B)$ are satisfied, then with probability one, for every simple $d$-polytope $P$ there are infinitely many cells of $X$ that are combinatorially isomorphic to $P$.
\end{theorem}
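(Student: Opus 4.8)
\medskip

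\noindent\textbf{Proof strategy.} Since every combinatorial type of $d$-polytope is encoded by a finite lattice, there are only countably many combinatorial types of simple $d$-polytopes, and combinatorial isomorphism is invariant under translations and positive dilations. Hence it suffices to fix one simple $d$-polytope $P$ and show that, with probability one, infinitely many cells of $X$ are combinatorially isomorphic to $P$; intersecting over the countably many types then gives the theorem. The first step is a deterministic \emph{stability} property of simple polytopes. Writing $P=\bigcap_{i=1}^N\{x\in\Rd:\langle x,u_i\rangle\le h_i\}$ with distinct facet normals $u_i\in\Sd$, all inequalities essential, and (after translation) $0\in\inn P$, one uses that the vertices of $P$ correspond to the $d$-subsets $I\subseteq\{1,\dots,N\}$ for which $\{u_i:i\in I\}$ is a basis and the point solving $\langle x,u_i\rangle=h_i$ $(i\in I)$ satisfies $\langle x,u_j\rangle<h_j$ for $j\notin I$, together with the boundedness condition $0\in\inn\conv\{u_1,\dots,u_N\}$ --- all open conditions. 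One concludes that there is an $\varepsilon>0$ such that whenever $v_i\in\Sd$, $t_i\in\R$ with $\|v_i-u_i\|<\varepsilon$ and $|t_i-h_i|<\varepsilon$, the polytope $\bigcap_{i=1}^N\{x:\langle x,v_i\rangle\le t_i\}$ is simple, has exactly $N$ facets, is combinatorially isomorphic to $P$, and (shrinking $\varepsilon$) lies within Hausdorff distance $\delta$ of $P$, for any prescribed $\delta>0$.

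Next I would establish a positive lower bound, uniform in the location $z\in\Rd$, on the probability that $X$ has a cell isomorphic to $P$ sitting inside $z+P_\delta$, where $P_\delta:=P+\delta B^d$. Write a hyperplane as $H(v,t)=\{x:\langle x,v\rangle=t\}$, with $(v,t)$ and $(-v,-t)$ identified; by the usual form of the intensity measure of a stationary Poisson hyperplane process, $\widehat\Theta$ is, up to a positive constant, the product of $\widehat\varphi$ and Lebesgue measure in the parameter. Fix $P$, $\varepsilon$, $\delta$ as above (with $\varepsilon$ small relative to $\delta$) and pairwise disjoint open caps $U_i\ni u_i$ of radius $<\varepsilon$. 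Put
\[
\mathcal G_i(z)=\{H(v,t):v\in U_i,\ |t-\langle z,v\rangle-h_i|<\varepsilon\},\qquad
\mathcal M(z)=\{H:H\cap(z+P_\delta)\neq\emptyset\}.
\]
The sets $\mathcal G_i(z)$ are pairwise disjoint and, for $\varepsilon$ small, contained in $\mathcal M(z)$; Assumption (A) gives $\widehat\varphi(U_i)>0$ (this is the only place (A) enters), hence $0<\widehat\Theta(\mathcal G_i(z))<\infty$, while $\widehat\Theta(\mathcal M(z))<\infty$ by local finiteness. Let $E_z$ be the event that $\widehat X$ has exactly one hyperplane in each $\mathcal G_i(z)$ and none in $\mathcal M(z)\setminus\bigcup_i\mathcal G_i(z)$. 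On $E_z$, translating the $N$ selected hyperplanes by $-z$ puts them into the perturbation regime just described, so $Q:=\bigcap_{i=1}^N H_i^-$ (with $H_i^-$ the halfspace of $H_i$ containing $z$) is combinatorially isomorphic to $P$ and contained in $z+P_\delta$; as every hyperplane of $\widehat X$ meeting $Q$ lies in $\mathcal M(z)$ and is thus one of $H_1,\dots,H_N$, this $Q$ is exactly the cell of $X$ through $\inn Q$. Using the independence of a Poisson process over disjoint sets,
\[
\bP(E_z)=\Bigl(\prod_{i=1}^N\widehat\Theta(\mathcal G_i(z))\,e^{-\widehat\Theta(\mathcal G_i(z))}\Bigr)e^{-\widehat\Theta(\mathcal M(z)\setminus\bigcup_i\mathcal G_i(z))}>0,
\]
which by stationarity is a constant $p_0>0$ independent of $z$.

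Finally I would pass from ``positive probability'' to ``almost surely infinitely often''. The arrangement of hyperplanes is locally finite, so the mosaic is a locally finite system of polytopes, and the point process $\eta_P$ of centroids of those cells of $X$ that are combinatorially isomorphic to $P$ is a well-defined stationary point process of finite intensity $\gamma_P$, and a translation-covariant measurable functional of $\widehat X$. Dilating $P$ so that some translate $z^\ast+P_\delta$ lies in the unit cube and invoking the previous step, $\gamma_P\ge\bP(E_{z^\ast})=p_0>0$. Since a stationary Poisson process is mixing and mixing passes to factors, $\eta_P$ is ergodic, so the spatial ergodic theorem yields $n^{-d}\eta_P([0,n]^d)\to\gamma_P>0$ almost surely, whence $\eta_P(\Rd)=\infty$ almost surely; intersecting over the countably many combinatorial types finishes the proof. (Equivalently: taking $z_k\to\infty$ with the $z_k+P_\delta$ disjoint, one has $\bP(\limsup_k E_{z_k})\ge\limsup_k\bP(E_{z_k})=p_0$, so the translation-invariant event ``infinitely many cells isomorphic to $P$'' has probability $\ge p_0$, hence, by ergodicity, probability one.)

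The combinatorial stability lemma for simple polytopes is routine but must be stated carefully. The real obstacle is the last step: hyperplanes are unbounded, so the events $E_{z_k}$ attached to widely separated windows are \emph{not} independent --- one hyperplane can meet many windows --- so a naive Borel--Cantelli argument is unavailable, and one must invoke the mixing/ergodicity of the Poisson hyperplane process (or, alternatively, a second-moment bound controlling the correlations $\bP(E_{z_k}\cap E_{z_l})-\bP(E_{z_k})\bP(E_{z_l})$).
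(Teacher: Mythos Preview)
Your argument is correct and takes a genuinely different route from the paper's. Both proofs share the setup: the reduction to countably many combinatorial types, the stability of simple polytopes under small perturbations of their facet hyperplanes, and the construction of a translation-covariant event $E_z$ of fixed positive probability $p_0$ on which the mosaic has a cell near $z$ combinatorially isomorphic to $P$ (this is the content of the paper's Lemma~\ref{Lem3.2} and inequality~(\ref{3.0})). The divergence is at the step ``$p_0>0\Rightarrow$ almost surely infinitely many''. The paper, in the proof of Lemma~\ref{Lem3.3} on which Theorem~\ref{Thm2} rests entirely, chooses a direction $t$ and translates $P_n=P+\sum_{i<n}\lambda_i t$ with the $\lambda_i$ large enough that the pairs are ``$\varepsilon$-disentangled''; it bounds $|\bP(E_i\cap E_k)-\bP(E_i)\bP(E_k)|$ by $1-\bP(H_{ik})^2$, controls $\widehat\Theta(\Ha((P_i)_\eta)\cap\Ha((P_k)_\eta))$ explicitly --- this is where Assumption~(B) is used --- and then applies the Erd\H os--R\'enyi second-moment Borel--Cantelli lemma (Lemma~\ref{Lem3.1}). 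You instead invoke the ergodicity of $\widehat X$, either through the spatial ergodic theorem for the derived stationary point process $\eta_P$ of centroids, or through the $0$--$1$ law for the translation-invariant event ``infinitely many cells isomorphic to $P$'', whose probability is $\ge p_0$ by reverse Fatou. Your route is more conceptual; the paper's is more elementary and self-contained, replacing the appeal to an ergodic theorem by an explicit correlation estimate. One remark: you assert that $\widehat X$ is \emph{mixing}, but for a stationary Poisson hyperplane process mixing is equivalent to $\widehat\varphi(v^\perp\cap\Sd)=0$ for all $v$, i.e.\ to Assumption~(B); so (B) does enter your argument, just in a different guise. Since your proof actually uses only ergodicity, and ergodicity of stationary Poisson flat processes holds without~(B), your approach in fact yields Theorem~\ref{Thm2} under Assumption~(A) alone.
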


The typical cell of the mosaic $X$ is a certain random polytope; see Section \ref{sec2}. Its distribution is a Borel measure on the space $\Kd_0$ of convex bodies in $\Rd$ with center at the origin; here the `center' refers to some continuous, translation covariant center function on the space of convex bodies, for example, the center of the circumball.

\begin{theorem}\label{Thm3}
If assumption $\rm (A)$ is satisfied, then  the support of the distribution of the typical cell is the whole space $\Kd_0$.
\end{theorem}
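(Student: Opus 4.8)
The plan is to reduce the statement to an analogous one about the zero cell $Z_0$ of $X$ (the cell containing the origin), and to prove the latter by a direct argument based on the independence properties of the Poisson process $\widehat X$. For the reduction, recall the classical relation between the typical cell $Z$ of a stationary mosaic and its zero cell: writing $z$ for the center function, the distribution of the centered zero cell $Z_0-z(Z_0)$ has a density proportional to the volume with respect to the distribution $\bQ_0$ of $Z$ on $\Kd_0$. Since $\bQ_0$ is concentrated on $d$-polytopes, where the volume is positive, the two measures are mutually absolutely continuous, hence have the same support; it therefore suffices to show that the law of $Z_0-z(Z_0)$ has support equal to $\Kd_0$. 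Using that the $d$-polytopes with center at the origin are dense in $\Kd_0$ and that $z$ is continuous and translation covariant, this in turn follows from the claim that for every $d$-polytope $P$ with $0\in\inn P$ and every $\varepsilon>0$ one has $\bP(\delta(Z_0,P)<\varepsilon)>0$. (Indeed, given $K\in\Kd_0$, approximate it by a $d$-polytope $Q$ with $z(Q)=0$; pick $p\in\inn Q$ and set $P:=Q-p$, so $0\in\inn P$ and $P-z(P)=Q$; then $\delta(Z_0,P)<\varepsilon$ forces $\delta(Z_0-z(Z_0),Q)$ to be small, by continuity of $z$.) Only Assumption (A) will be used; Assumption (B), needed in Theorems \ref{Thm1} and \ref{Thm2} to control all cells of $X$ at once, plays no role.

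To prove the claim, write $P=\bigcap_{i=1}^{N}\{x\in\Rd:\langle x,u_i\rangle\le t_i\}$ irredundantly, so that the $H_i:=\{x:\langle x,u_i\rangle=t_i\}$ are exactly the facet hyperplanes of $P$ and $t_i>0$. Fix $\varepsilon>0$ and choose $\eta>0$ so small that the inner polytope $P_{-\eta}:=\bigcap_{i=1}^{N}\{x:\langle x,u_i\rangle\le t_i-\eta\}$ still satisfies $0\in\inn P_{-\eta}$ and $\delta(P,P_{-\eta})<\varepsilon/2$. Pick pairwise disjoint open neighbourhoods $\mathcal U_i$ of the $H_i$ in the space of hyperplanes, small enough that (i) each $\mathcal U_i$ is disjoint from the set $\Ha_{P_{-\eta}}$ of hyperplanes meeting $P_{-\eta}$, and (ii) whenever $H_i'\in\mathcal U_i$ for $i=1,\dots,N$, the intersection $\bigcap_{i=1}^{N}(H_i')^{-}$ of the closed halfspaces $(H_i')^{-}$ bounded by $H_i'$ and containing $0$ is a bounded polytope lying within Hausdorff distance $\varepsilon/2$ of $P$. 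Requirement (i) can be met because $H_i\cap P_{-\eta}=\emptyset$, and (ii) because the given $H$-representation of $P$ is bounded and irredundant, so the map $(H_1',\dots,H_N')\mapsto\bigcap_{i=1}^{N}(H_i')^{-}$ is continuous at $(H_1,\dots,H_N)$ in the Hausdorff metric.

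Now introduce the event $E_1$ that $\widehat X$ has at least one hyperplane in each $\mathcal U_i$, and the event $E_2$ that $\widehat X$ has no hyperplane meeting $P_{-\eta}$. By (i) and the disjointness of the $\mathcal U_i$, these events depend on the restrictions of $\widehat X$ to pairwise disjoint sets of hyperplanes, so the independence property of the Poisson process gives $\bP(E_1\cap E_2)=\bP(E_2)\prod_{i=1}^{N}\bP(\widehat X\cap\mathcal U_i\neq\emptyset)$. Here $\bP(E_2)=e^{-\widehat\Theta(\Ha_{P_{-\eta}})}>0$, since $\widehat\Theta$ is locally finite and $P_{-\eta}$ is bounded, and $\bP(\widehat X\cap\mathcal U_i\neq\emptyset)=1-e^{-\widehat\Theta(\mathcal U_i)}$ is positive as soon as $\widehat\Theta(\mathcal U_i)>0$. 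This last point is exactly where Assumption (A) enters: in the standard representation of $\widehat\Theta$ through the directional distribution $\widehat\varphi$ and Lebesgue measure along the normal direction, every nonempty open set of hyperplanes receives positive $\widehat\Theta$-mass, because $\supp\widehat\varphi=\Sd$. Hence $\bP(E_1\cap E_2)>0$.

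Finally, on $E_1\cap E_2$ — intersected with the almost sure event that no hyperplane of $\widehat X$ passes through $0$ — the zero cell is $Z_0=\bigcap_{H\in\widehat X}H^{-}$, where $H^{-}$ is the closed halfspace bounded by $H$ that contains $0$. Choosing some $H_i'\in\widehat X\cap\mathcal U_i$ for each $i$, we obtain $Z_0\subseteq\bigcap_{i=1}^{N}(H_i')^{-}$; and by $E_2$ the convex set $P_{-\eta}$, which contains $0$ and is disjoint from every hyperplane of $\widehat X$, is contained in the cell containing $0$, that is, $P_{-\eta}\subseteq Z_0$. Thus $P_{-\eta}\subseteq Z_0\subseteq\bigcap_{i=1}^{N}(H_i')^{-}$; in particular $Z_0$ is bounded, and since both the inner and the outer polytope here are within Hausdorff distance $\varepsilon/2$ of $P$, so is $Z_0$, whence $\delta(Z_0,P)<\varepsilon$. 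Therefore $\bP(\delta(Z_0,P)<\varepsilon)\ge\bP(E_1\cap E_2)>0$, which proves the claim and hence the theorem. The steps requiring the most care are the geometric input in (ii) — that small perturbations of the facet hyperplanes of a bounded polytope yield a polytope Hausdorff-close to it, which is where irredundancy of the representation is used — and the translation bookkeeping in the first paragraph needed to pass from the origin-anchored cell $Z_0$ to the center-anchored typical cell on $\Kd_0$; the probabilistic part (independence over disjoint regions of the space of hyperplanes, positivity of $\widehat\Theta$ on open sets from Assumption (A)) is then straightforward.
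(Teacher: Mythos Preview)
Your proof is correct and reaches the conclusion by a genuinely different route. The paper does not pass through the zero cell: it takes the event $E(P,\varepsilon)$ constructed in the proof of Lemma~\ref{Lem3.3} (exactly one hyperplane of $\widehat X$ in each $A_j(P,\varepsilon)$, none in $C(P,\varepsilon)$), observes that it has positive probability $p(\varepsilon)$ by Assumption~(A) alone, and then converts this directly into $\bQ^{(d)}(B)>0$ via the decomposition~(\ref{2.2}) of the intensity measure $\Theta^{(d)}$, applied to the set $A=\{K\in\Kd:c(K)\in P_\eta,\ K-c(K)\in B\}$. You instead invoke the volume-biasing relation between the centered zero cell and the typical cell, reduce to showing $\bP(\delta(Z_0,P)<\varepsilon)>0$, and obtain this by the sandwich $P_{-\eta}\subseteq Z_0\subseteq\bigcap_i (H_i')^{-}$ on your event $E_1\cap E_2$. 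Your construction is slightly more flexible (``at least one'' hyperplane near each facet suffices, and extra hyperplanes outside $P_{-\eta}$ do no harm), whereas the paper's route stays within the machinery already set up, avoids citing the zero-cell/typical-cell relation, and even yields the explicit lower bound $\bQ^{(d)}(B)\ge p(\varepsilon)/\bigl(\gamma^{(d)}\lambda(P_\eta)\bigr)$.
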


\section{Notation and explanations}\label{sec2}

For a convex body $K\in\Kd$ and a number $\eta \ge 0$, we denote by $K_\eta:= K+\eta B^d$ (where $B^d$ denotes the unit ball of $\Rd$) the parallel body of $K$ at distance $\eta$. The Hausdorff distance of convex bodies $K,L\in\Kd$ is defined by 
$$ \delta(K,L)= \min\{\eta\ge 0: K\subset L_\eta,\; L\subset K_\eta\}.$$
Then $\delta$ is a metric on $\Kd$. Topological notions for $\Kd$ refer to the topology induced by this metric. In particular, $\B(\Kd)$ denotes the $\sigma$-algebra of Borel sets in $\Kd$.

By $\Ha^d$ we denote the space of hyperplanes in $\Rd$, with its usual topology. The $\sigma$-algebra of Borel sets in $\Ha^d$ is denoted by $\B(\Ha^d)$. For a subset $M\subset\Rd$ we define
$$ \Ha(M):=\{H\in\Ha^d: H\cap M\not=\emptyset\}.$$

In the following, notation concerning stochastic geometry is as in \cite{SW08}, in particular Section 10.3, to which we also refer for more detailed information. As already mentioned, $\widehat X$ is assumed to be a stationary Poisson hyperplane process in $\Rd$, thus, a Poisson point process in the space $\Ha^d$, whose distribution is invariant under translations. Since we consider only simple point processes, it is convenient to identify a simple counting measure with its support. Thus, in the following, the realizations of $\widehat X$ are considered as locally finite systems of hyperplanes. The intensity measure $\widehat\Theta$ of $\widehat X$ is defined by
$$ \widehat\Theta(A)= \bE |\widehat X\cap A| \quad\mbox{for }A\in\B(\Ha^d),$$
where $|\cdot|$ denotes the number of elements (we denote expectations by $\bE$, and the probability by $\bP$). We assume that $\widehat\Theta$ is locally finite and not identically zero. Due to the stationarity assumption, the measure $\widehat\Theta$ has a decomposition: there are a number $\widehat\gamma>0$, the {\em intensity} of $\widehat X$, and an even Borel probability measure $\widehat\varphi$ on the unit sphere $\Sd$ of $\Rd$ such that
\begin{equation}\label{2.1}
\widehat\Theta(A) = \widehat\gamma \int_{\Sd} \int_{-\infty}^\infty {\bf 1}_A(u^\perp+\tau u)\,\D\tau\,\widehat\varphi(\D u)
\end{equation}
for $A\in\B(\Ha^d)$ (see \cite[Theorem 4.4.2 and (4.33)]{SW08}). Here ${\bf 1}_A$ denotes the indicator function  of $A$, and $u^\perp$ is the hyperplane through $0$ orthogonal to the unit vector $u$. The measure $\widehat \varphi$ is called the {\em spherical directional distribution} of $\widehat X$. 

The Poisson assumption says that
$$ \bP \{|\widehat X\cap A|=n \} = e^{-\widehat\Theta(A)} \frac{\widehat\Theta(A)^n}{n!}\quad\mbox{for }n\in\N_0,$$
for all $A\in\B(\Ha^d)$ with $\widehat\Theta(A)<\infty$. If $A_1,A_2, \ldots\in \B(\Ha^d)$ are pairwise disjoint, then the restricted processes $\widehat X\cap A_1,\widehat X\cap A_2,\dots$ are stochastically independent (see, e.g., \cite[Theorem 3.2.2]{SW08}). This fact is crucial for the results of the present note.

The mosaic $X$ induced by $\widehat X$ is usually considered as a particle process (see, e.g., \cite[Chapter 10]{SW08}); with our convention, the realizations of $X$ are certain sets of polytopes in $\Rd$. The intensity measure $\Theta^{(d)}$ of the mosaic $X$ is defined by
$$ \Theta^{(d)}(A) = \bE|X\cap A|\quad\mbox{for }A\in \B(\Kd).$$
By stationarity, it, too, has a decomposition. For this, we choose any continuous function $c:\Kd\to\Rd$ which is translation covariant and satisfies $c(K)\in K$, for example, the center of the circumball or the Steiner point. This function $c$ is called the {\em center function}, and we denote by $\Kd_0$ the set of all convex bodies $K\in\Kd$ with $c(K)=0$. With this choice, there exist a number $\gamma^{(d)}>0$, the {\em intensity} of $X$, and a Borel probability measure $\bQ^{(d)}$ on $\Kd_0$ such that
\begin{equation}\label{2.2} 
\Theta^{(d)}(A) = \gamma^{(d)} \int_{\Kd_0} \int_{\Rd} {\bf 1}_A(C+x)\,\lambda(\D x)\,\bQ^{(d)}(\D C)
\end{equation}
for $A\in\B(\Kd)$, where $\lambda$ denotes Lebesgue measure on $\Rd$ (see \cite{SW08}, Theorem 4.1.1 and (4.3)). Clearly, the measure $\bQ^{(d)}$ is concentrated on the set of polytopes in $\Kd_0$ (which is a Borel set). 

The {\em typical cell} of the random mosaic $X$, denoted by $Z$, is now defined as a random polytope with distribution $\bQ^{(d)}$. A more intuitive interpretation of the typical cell is obtained as follows. Let $W\in\Kd$ be a convex body with interior points. Then, for $A\in\B(\Kd_0)$,
$$ \bP\left\{Z\in A\right\}=\lim_{r\to\infty} \frac{\bE \sum_{C\in X,\, C\subset rW} {\bf 1}_A(C-c(C))}
{\bE \sum_{C\in X,\,C\subset rW} 1}.$$
This can be deduced from \cite[Theorem 4.1.3]{SW08}.

\section{Proofs of the theorems}\label{sec3}

We shall need the following generalization of the Borel--Cantelli lemma. 

\begin{lemma}\label{Lem3.1}
Let $E_1,E_2,\dots$ be a sequence of events (on some probability space) with $\sum_{j=1}^\infty \bP(E_j) =\infty$ and 
$$ \liminf_{n\to\infty}\frac{\sum^n_{i,j=1,\, i\neq j} [\bP(E_i\cap E_j)-\bP(E_i)\bP(E_j)]}{(\sum_{j=1}^n \bP(E_j))^2}= 0.$$
Then $\bP(\limsup_{j\to\infty} E_j)=1$.
\end{lemma}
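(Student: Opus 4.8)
\noindent\emph{Proof proposal.}\; The plan is to run the classical second-moment argument (Cauchy--Schwarz applied to $\sum_j {\bf 1}_{E_j}$), but carried out on the \emph{tails} of the sequence $(E_j)$, so that it sees $\limsup_{j\to\infty}E_j$ rather than only $\bigcup_j E_j$.

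Fix $m\in\N$ and, for $n\ge m$, set $T_{m,n}:=\sum_{j=m}^n{\bf 1}_{E_j}$, a bounded nonnegative random variable with $\{T_{m,n}>0\}=\bigcup_{j=m}^n E_j$. Cauchy--Schwarz applied to $\bE T_{m,n}=\bE\big(T_{m,n}\,{\bf 1}_{\{T_{m,n}>0\}}\big)$ gives $(\bE T_{m,n})^2\le\bE T_{m,n}^2\cdot\bP(T_{m,n}>0)$, that is,
$$\bP\Big(\bigcup_{j=m}^n E_j\Big)\ \ge\ \frac{(\bE T_{m,n})^2}{\bE T_{m,n}^2}$$
for all $n$ large enough that $\bE T_{m,n}=\sum_{j=m}^n\bP(E_j)>0$ (such $n$ exist since $\sum_j\bP(E_j)=\infty$ while $\sum_{j<m}\bP(E_j)<\infty$). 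Letting $n\to\infty$, the left side increases to $\bP\big(\bigcup_{j\ge m}E_j\big)$, so it suffices to prove that $\limsup_{n\to\infty}(\bE T_{m,n})^2/\bE T_{m,n}^2=1$ for every $m$; then $\bP\big(\bigcup_{j\ge m}E_j\big)=1$ for all $m$, and since $\bigcup_{j\ge m}E_j\downarrow\limsup_j E_j$, continuity of $\bP$ from above gives $\bP(\limsup_j E_j)=1$.

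To estimate the ratio, put $a_n:=\sum_{j=1}^n\bP(E_j)$ and $c_{ij}:=\bP(E_i\cap E_j)-\bP(E_i)\bP(E_j)$. Then $\bE T_{m,n}=a_n-a_{m-1}$, while $\bE T_{m,n}^2=\sum_{j=m}^n\bP(E_j)+\sum_{i\ne j,\,m\le i,j\le n}\bP(E_i\cap E_j)$. Since all intersection probabilities are nonnegative, the last double sum is at most the corresponding sum over $1\le i,j\le n$; writing $\bP(E_i\cap E_j)=c_{ij}+\bP(E_i)\bP(E_j)$ and using $\sum_{i\ne j}\bP(E_i)\bP(E_j)\le a_n^2$ and $\sum_{j=m}^n\bP(E_j)\le a_n$, we obtain
$$\bE T_{m,n}^2\ \le\ \sum_{\substack{i,j=1\\ i\ne j}}^{n}c_{ij}\ +\ a_n^2\ +\ a_n .$$
With $b:=a_{m-1}$ fixed this yields, for all large $n$,
$$\frac{(\bE T_{m,n})^2}{\bE T_{m,n}^2}\ \ge\ \frac{(a_n-b)^2}{a_n^2}\cdot\Big(1+\frac{\sum_{i\ne j}c_{ij}}{a_n^2}+\frac{1}{a_n}\Big)^{-1}.$$
By hypothesis $a_n\to\infty$ and $\liminf_n\big(\sum_{i\ne j}c_{ij}\big)/a_n^2=0$, so along a suitable subsequence $(n_k)$ — chosen once and for all, independently of $m$ — the right-hand side tends to $1$. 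Hence $\limsup_n(\bE T_{m,n})^2/\bE T_{m,n}^2\ge 1$; the reverse inequality is again Cauchy--Schwarz, so the $\limsup$ equals $1$, as required.

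The step I expect to be the real obstacle is precisely this passage to the tails. The hypothesis controls the centered double sums $\sum_{i\ne j}c_{ij}$ only when the indices run from $1$, whereas detecting $\limsup_j E_j$ (and not merely $\bigcup_j E_j$) forces one to work with $T_{m,n}$ and $\bigcup_{j\ge m}E_j$. The resolution is that dropping the low-index terms can only decrease $\sum\bP(E_i\cap E_j)$, while the discrepancies that then appear — $a_n$ versus $a_n-b$, and the extra additive $a_n$ — are asymptotically harmless because $b=\sum_{j<m}\bP(E_j)$ is a fixed finite constant and $a_n\to\infty$; and, crucially, a single subsequence extracted from the $\liminf$ condition works simultaneously for every $m$.
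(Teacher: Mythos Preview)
Your argument is correct. The paper, however, does not actually prove the lemma: it records the algebraic identity
\[
\frac{1}{\lambda_n^2}\sum_{\substack{i,j=1\\ i\ne j}}^{n}\bigl[\bP(E_i\cap E_j)-\bP(E_i)\bP(E_j)\bigr]
=\frac{\sum_{i,j=1}^n\bP(E_i\cap E_j)}{\lambda_n^2}-1+\frac{\sum_{j=1}^n\bP(E_j)^2-\lambda_n}{\lambda_n^2},
\]
with $\lambda_n=\sum_{j\le n}\bP(E_j)$, notes that the last fraction is $o(1)$ because $0\le\sum_j\bP(E_j)^2\le\lambda_n\to\infty$, and thereby reduces the statement to the original Erd\"os--R\'{e}nyi formulation, which it then cites. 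What you have written is precisely the classical second-moment (Paley--Zygmund) argument that underlies that cited result, carried out directly; your handling of the tail truncation---bounding $\sum_{m\le i,j\le n}\bP(E_i\cap E_j)$ by the full sum, absorbing the fixed constant $b=a_{m-1}$ since $a_n\to\infty$, and observing that a \emph{single} subsequence realizing the $\liminf$ serves for every $m$---is exactly what is needed to pass from $\bigcup_j E_j$ to $\limsup_j E_j$. In short, you have supplied the proof that the paper outsources to the literature.
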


This is a slight reformulation, convenient for our purposes, of a result by Erd\"os and R\'{e}nyi \cite{ER59}; see also \cite[p. 327, Hilfssatz C]{Ren66}. In fact, with $\lambda_n= \sum_{j=1}^n \bP(E_j)$, the identity
$$ \frac{1}{\lambda_n^2} \sum_{i,j=1,\,i\not= j}^n [\bP(E_i\cap E_j)-\bP(E_i)\bP(E_j)] = \frac{\sum_{i,j=1}^n \bP(E_i\cap E_j)}{\lambda_n^2} -1 +\frac{\sum_{j=1}^n \bP(E_j)^2 -\lambda_n}{\lambda_n^2}$$
holds, so that our assumptions imply the assumptions of Erd\"os and R\'{e}nyi.

We prepare the proofs of Theorems \ref{Thm1}, \ref{Thm2}, \ref{Thm3} by some geometric considerations and corresponding lemmas.

For a polytope $Q$, we denote by ${\rm vert}\,Q$ the set of vertices of $Q$. In the following two lemmas, $P\subset\Rd$ is a given convex polytope with interior points. Let $F_1,\dots,F_m$ be the facets of $P$. By $B(v,\varepsilon)$ we denote the closed ball with center $v\in \Rd$ and radius $\varepsilon\ge 0$. For $j\in\{1,\dots,m\}$, we define
$$ A_j(P,\varepsilon) := \bigcap_{v\in{\rm vert}\,F_j} \Ha(B(v,\varepsilon)).
$$
Thus, a hyperplane belongs to $A_j(P,\varepsilon)$ if and only if it has distance at most $\varepsilon$ from each vertex of the facet $F_j$. Each hyperplane of $A_j(P,\varepsilon)$ is said to be {\em $\varepsilon$-close} to $F_j$. 

For every neighborhood ${\mathcal N}$ (in $\Ha^d$) of the affine hull of the facet $F_j$ we have $A_j(P,\varepsilon)\subset{\mathcal N}$ for all sufficiently small $\varepsilon$. Therefore, for sufficiently small $\varepsilon>0$, the following holds. The sets $A_1(P,\varepsilon), \dots, A_m(P,\varepsilon)$ are pairwise disjoint. If $H_j\in A_j(P,\varepsilon)$ for $j=1,\dots,m$, then each hyperplane $H_j$ determines a closed halfspace that contains the vertices of $P$ that are not vertices of $F_j$, and the intersection of these halfspaces is a convex polytope. Such a polytope is said to be {\em $\varepsilon$-close} to $P$. 

\begin{lemma}\label{Lem3.2}
To every $\eta>0$, there exists $\varepsilon>0$ such that $A_1(P,\varepsilon), \dots, A_m(P,\varepsilon)$ are pairwise disjoint and that any hyperplanes $H_j\in A_j(P,\varepsilon)$, $j=1,\dots,m$, are the facet hyperplanes of a polytope $Q$ that satisfies $\delta(Q,P)\le \eta$.
\end{lemma}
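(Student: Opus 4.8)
The plan is to make rigorous the intuitive picture already sketched before the lemma: if the facet hyperplanes of $P$ are perturbed slightly, the resulting halfspaces intersect in a polytope close to $P$. First I would fix $\eta>0$ and work with the combinatorial data of $P$. For each facet $F_j$ let $H_j^0=\aff F_j$ be its affine hull, and let $H_j^0$ bound the closed halfspace $H_j^{0,-}$ containing $P$, so that $P=\bigcap_{j=1}^m H_j^{0,-}$ and this representation is irredundant. I would then choose a continuity/stability statement for this intersection: the map $(H_1,\dots,H_m)\mapsto \bigcap_{j=1}^m H_j^-$ (with $H_j^-$ the halfspace bounded by $H_j$ on the side containing the vertices of $P$ not lying in $F_j$, as specified in the paragraph preceding the lemma) is, near $(H_1^0,\dots,H_m^0)$, continuous into $\Kd$ with respect to the Hausdorff metric. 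This is where the main work lies.

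To prove that stability, I would argue as follows. Since $P$ has interior points, fix a point $z\in\inn P$ and a small ball $B(z,\rho)\subset\inn P$. For hyperplanes $H_j$ sufficiently close to $H_j^0$, each $H_j^-$ still contains $B(z,\rho)$, so $Q:=\bigcap_j H_j^-\supset B(z,\rho)$ is nonempty with interior points; also $Q$ stays bounded, because the outer normals of the $H_j^-$ are close to those of the $H_j^{0,-}$, and the latter positively span $\Rd$ (as $P$ is bounded), a condition that is open. Thus $Q\in\Kd$ for all hyperplanes in a neighborhood of $(H_1^0,\dots,H_m^0)$. For the Hausdorff convergence $Q\to P$ as $H_j\to H_j^0$, I would use support functions: $h_Q\le \max_j h_{H_j^-}$ on $\Sd$ pointwise is not quite it, so instead I would invoke the standard fact (e.g.\ from convex geometry, cf.\ the monograph conventions of \cite{SW08}) that if $Q_k=\bigcap_j H_{j,k}^-$ with $H_{j,k}\to H_j^0$ and all $Q_k$ lie in a fixed ball and contain a fixed ball, then $Q_k\to P$ in the Hausdorff metric. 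One clean way: any accumulation point $Q^*$ of $(Q_k)$ in $\Kd$ (which exists by the Blaschke selection theorem, since all $Q_k$ are uniformly bounded) satisfies $Q^*\subset H_j^{0,-}$ for every $j$ (since $H_{j,k}^-\to H_j^{0,-}$ and $Q_k\subset H_{j,k}^-$), hence $Q^*\subset P$; conversely $B(z,\rho)\subset Q^*$ and, because the bounding hyperplanes $H_{j,k}$ actually touch $Q_k$ near the corresponding facet of $P$, one gets $P\subset Q^*$ as well, so $Q^*=P$ and the whole sequence converges.

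Granting the stability statement, I would finish by a compactness/contradiction argument to extract $\varepsilon$. Suppose no such $\varepsilon$ works: then there is a sequence $\varepsilon_k\downarrow 0$ and, for each $k$, hyperplanes $H_{j,k}\in A_j(P,\varepsilon_k)$ such that either $A_1(P,\varepsilon_k),\dots,A_m(P,\varepsilon_k)$ are not pairwise disjoint, or the resulting polytope $Q_k$ satisfies $\delta(Q_k,P)>\eta$. The first alternative is excluded for small $\varepsilon_k$ by the remark preceding the lemma (the sets $A_j(P,\varepsilon)$ shrink into disjoint neighborhoods of the $H_j^0$). For the second, note that $H_{j,k}\in A_j(P,\varepsilon_k)$ forces $H_{j,k}$ to pass within $\varepsilon_k$ of every vertex of $F_j$; since $F_j$ has affinely independent-spanning vertex set within $H_j^0$ and $P$ is full-dimensional, this pins down $H_{j,k}\to H_j^0$ as $k\to\infty$. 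By the stability statement, $\delta(Q_k,P)\to 0$, contradicting $\delta(Q_k,P)>\eta$. Hence some $\varepsilon>0$ has all the required properties, and moreover by construction each $H_j$ is then the facet hyperplane of $Q$ carrying the facet corresponding to $F_j$.

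The main obstacle, then, is the Hausdorff-continuity of the halfspace-intersection map near a full-dimensional, irredundantly represented polytope; everything else is a routine compactness argument plus the elementary observation that being $\varepsilon$-close to $F_j$ for all vertices of $F_j$ forces convergence of the hyperplane. I would handle the continuity via the Blaschke selection theorem together with a uniform inner ball and a uniform outer bound, as indicated above, rather than through an explicit estimate of support functions, since that keeps the argument short and avoids case distinctions about which facets of $Q$ correspond to which facets of $P$.
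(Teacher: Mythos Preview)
Your argument is correct but takes a different route from the paper. The paper works directly with vertices: each vertex of $P$ is the intersection of $d$ facet hyperplanes with linearly independent normals, and such an intersection point depends continuously on the hyperplanes; hence for small $\varepsilon$ every vertex of $Q$ lies in some $B(v,\eta)$ with $v\in\vert P$, and conversely, which immediately yields $Q\subset P_\eta$ and $P\subset Q_\eta$. You instead prove continuity of the whole halfspace-intersection map by a soft compactness argument, using Blaschke selection and a sandwich ($Q^*\subset P$ from $Q_k\subset H_{j,k}^-\to H_j^{0,-}$; $P\subset Q^*$ from persistence of an interior ball). The paper's route is shorter and entirely elementary; yours has the advantage of sidestepping the combinatorial bookkeeping at non-simple vertices of $P$, where a single vertex may split into several under perturbation. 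One small clean-up: for $P\subset Q^*$ the most direct justification is that every $p\in\inn P$ lies strictly inside each $H_j^{0,-}$, hence in $H_{j,k}^-$ for all large $k$, so $\inn P\subset Q_k$ eventually and thus $P\subset Q^*$; this replaces your somewhat informal appeal to the hyperplanes ``touching $Q_k$ near the corresponding facet''.
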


\begin{proof}
The remaining assertion follows easily from the following facts. If to each vertex $x$ of $Q$ there is a vertex $v$ of $P$ with $x\in B(v,\eta)$, then $Q\subset P_\eta$ (and similarly, with $P$ and $Q$ interchanged). If the sequence $(H_{ir})_{r\in\N}$ of hyperplanes converges to the hyperplane $H_i$, for $i=1,\dots,d$, and if $H_1,\dots,H_d$ have linearly independent normal vectors, then for almost all $r$, also $H_{1r},\dots,H_{dr}$ have independent normal vectors, and their intersection point converges to the intersection point of $H_1,\dots,H_d$.
\end{proof}

\begin{lemma}\label{Lem3.3}
Let $\varepsilon_0>0$. With probability one, there are infinitely many cells in $X$ such that for each of these cells a translate is $\varepsilon_0$-close to $P$.
\end{lemma}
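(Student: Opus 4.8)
The plan is to apply the generalized Borel--Cantelli Lemma~\ref{Lem3.1} to a suitable sequence of events, each asserting that a cell $\varepsilon_0$-close to a translate of $P$ appears in a prescribed bounded region. Fix $\eta=\varepsilon_0$ and let $\varepsilon>0$ be the radius provided by Lemma~\ref{Lem3.2}, so that whenever $H_j\in A_j(P,\varepsilon)$ for $j=1,\dots,m$ the hyperplanes $H_1,\dots,H_m$ are the facet hyperplanes of a polytope $Q$ with $\delta(Q,P)\le\varepsilon_0$. Shrinking $\varepsilon$ further if needed, I can also require that the $A_j(P,\varepsilon)$ are pairwise disjoint and that any such $Q$ lies in the ball $B(0,R)$ for a fixed radius $R=R(P)$. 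For $k\in\N$, choose translation vectors $x_k\in\Rd$ with $\|x_k\|\to\infty$ fast enough that the balls $B(x_k,R)$ are pairwise disjoint, and let $A_j^{(k)}:=A_j(P,\varepsilon)+x_k$ be the translated strip-sets. Define the event
\[
 E_k:=\Big\{\,|\widehat X\cap A_j^{(k)}|=1 \text{ for all } j=1,\dots,m,\quad |\widehat X\cap A_0^{(k)}|=0\,\Big\},
\]
where $A_0^{(k)}$ is the set of hyperplanes meeting $B(x_k,R)$ but lying in none of the $A_j^{(k)}$. On $E_k$, the $m$ hyperplanes one in each $A_j^{(k)}$ bound a polytope $Q_k$ with $\delta(Q_k-x_k,P)\le\varepsilon_0$, and since no further hyperplane of $\widehat X$ meets $B(x_k,R)\supset Q_k$, this $Q_k$ is genuinely a cell of $X$. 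Hence $E_k$ implies that some cell of $X$ has a translate $\varepsilon_0$-close to $P$, and the events $E_k$ concern pairwise disjoint regions of $\Ha^d$.

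The next step is to show $\bP(E_k)>0$ and is the same for all $k$. This is where Assumptions (A) and (B) enter: by (A) the support of $\widehat\varphi$ is all of $\Sd$, so every nonempty open set of hyperplanes has positive $\widehat\Theta$-measure; in particular each $A_j(P,\varepsilon)$ has nonempty interior (it contains hyperplanes in a neighborhood of $\aff F_j$) and therefore $0<\widehat\Theta(A_j(P,\varepsilon))<\infty$. Assumption (B) guarantees that $\widehat\Theta$ gives no mass to the (lower-dimensional) exceptional sets of hyperplanes that pass through a vertex of $P$ or fail to be in general position, so after a further harmless shrinking of $\varepsilon$ the bad configurations inside $E_k$ have probability zero and can be ignored. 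By the Poisson property and independence over the disjoint sets $A_0^{(k)},A_1^{(k)},\dots,A_m^{(k)}$,
\[
 \bP(E_k)=e^{-\widehat\Theta(A_0(P,\varepsilon))}\prod_{j=1}^m \widehat\Theta(A_j(P,\varepsilon))\,e^{-\widehat\Theta(A_j(P,\varepsilon))}=:p>0,
\]
where I used translation invariance of $\widehat\Theta$ so that $\widehat\Theta(A_j^{(k)})=\widehat\Theta(A_j(P,\varepsilon))$ and $\widehat\Theta(A_0^{(k)})=\widehat\Theta(A_0(P,\varepsilon))$ are independent of $k$. Consequently $\sum_k\bP(E_k)=\infty$.

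Finally, independence of the $E_k$ makes the correlation hypothesis of Lemma~\ref{Lem3.1} trivially satisfied: since $A_0^{(k)}\cup A_1^{(k)}\cup\dots\cup A_m^{(k)}$ and $A_0^{(\ell)}\cup\dots\cup A_m^{(\ell)}$ are disjoint for $k\ne\ell$ (the balls $B(x_k,R)$ being disjoint), the restrictions $\widehat X\cap A_j^{(k)}$ and $\widehat X\cap A_i^{(\ell)}$ are independent by the Poisson property, hence $\bP(E_i\cap E_j)=\bP(E_i)\bP(E_j)$ for $i\ne j$ and the numerator in Lemma~\ref{Lem3.1} vanishes identically. Therefore $\bP(\limsup_k E_k)=1$; on this event infinitely many $E_k$ occur, each supplying a distinct cell of $X$ (distinct because they live in the disjoint balls $B(x_k,R)$) with a translate $\varepsilon_0$-close to $P$. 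This proves the lemma. The only delicate point is the bookkeeping in the second paragraph: one must choose $\varepsilon$ small enough \emph{simultaneously} for the conclusion of Lemma~\ref{Lem3.2}, for disjointness of the $A_j(P,\varepsilon)$, for containment of all resulting polytopes in a fixed ball, and for the exceptional configurations to have $\widehat\Theta$-measure zero under (B); all four are achievable since each holds for all sufficiently small $\varepsilon$, so a single small $\varepsilon$ works.
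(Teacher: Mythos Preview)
Your argument contains a genuine gap at the independence step. You claim that because the balls $B(x_k,R)$ and $B(x_\ell,R)$ are pairwise disjoint, the sets
\[
A_0^{(k)}\cup A_1^{(k)}\cup\dots\cup A_m^{(k)}\quad\text{and}\quad A_0^{(\ell)}\cup A_1^{(\ell)}\cup\dots\cup A_m^{(\ell)}
\]
are disjoint subsets of $\Ha^d$, and hence that $E_k$ and $E_\ell$ are independent. But these are sets of \emph{hyperplanes}, and a hyperplane is unbounded: a single hyperplane can perfectly well meet both $B(x_k,R)$ and $B(x_\ell,R)$, however far apart the balls are. In fact $\Ha(B(x_k,R))\cap\Ha(B(x_\ell,R))$ is never empty and always has positive $\widehat\Theta$-measure. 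Consequently $A_0^{(k)}$ and $A_0^{(\ell)}$ overlap, the restrictions of $\widehat X$ to the two regions are \emph{not} independent, and in general $\bP(E_k\cap E_\ell)\neq\bP(E_k)\bP(E_\ell)$. This is precisely the ``long-range dependence'' of hyperplane processes that forces the paper to use the generalized Borel--Cantelli Lemma~\ref{Lem3.1} rather than the ordinary one; see also the paper's remark in Section~4 contrasting this with the Voronoi case.

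The paper handles this by first choosing a direction $t$ and spacings $\lambda_n$ so that the translates $P_n$ are ``$\varepsilon$-disentangled'' (no hyperplane $\varepsilon$-close to a facet of $P_i$ can meet $(P_k)_\eta$), then introducing auxiliary events $E_{ik}$ (which ignore hyperplanes in $\Ha_i\cap\Ha_k$) and $H_{ik}=\{|\widehat X\cap\Ha_i\cap\Ha_k|=0\}$. The events $E_{ik}$ and $E_{ki}$ \emph{are} independent, and one bounds $|\bP(E_i\cap E_k)-\bP(E_i)\bP(E_k)|$ by $1-\bP(H_{ik})^2$. Assumption~(B) enters here, not where you invoke it: it is used to show $\widehat\Theta(\Ha_i\cap\Ha_k)\to 0$ as $\lambda_i\to\infty$ (since the relevant directions shrink to the great subsphere $t^\perp\cap\Sd$, which has $\widehat\varphi$-measure zero), so that $\bP(H_{ik})$ can be made arbitrarily close to $1$ and the correlation hypothesis of Lemma~\ref{Lem3.1} is satisfied. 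Your use of (B) to rule out ``exceptional configurations'' is unnecessary (Lemma~\ref{Lem3.2} already guarantees the polytope $Q$ exists) and does not address the actual dependence issue.
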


\begin{proof}

We choose a number $\eta>0$ and then, according to Lemma \ref{Lem3.2}, a number $0<\varepsilon\le \varepsilon_0$ such that that $A_1(P,\varepsilon), \dots, A_m(P,\varepsilon)$ are pairwise disjoint and that $\delta(Q,P)\le \eta$ if the polytope $Q$ is $\varepsilon$-close to $P$. In particular, all polytopes that are $\varepsilon$-close to $P$ are contained in the parallel body $P_\eta$.

We extend the definition of $A_j(P,\varepsilon)$ to the translates of P. For $t\in\Rd$, let 
$$ A_j(P+t,\varepsilon) := \bigcap_{v\in{\rm vert}(F_j+t)} \Ha(B(v,\varepsilon)).$$
Further, set
$$ A(P+t,\varepsilon) := \bigcup_{j=1}^m A_j(P+t,\varepsilon).$$

\vspace{2mm}

\noindent{\bf Definition.} 
Let $t_1,t_2\in\Rd$. The polytopes $P+t_1$ and $P+t_2$ are {\em $\varepsilon$-disentangled} if 
$$ A(P+t_1,\varepsilon)\cap \Ha(P_{\eta}+t_2) = \emptyset \quad \mbox{and} \quad A(P+t_2,\varepsilon)\cap \Ha(P_{\eta}+t_1)=\emptyset.$$ 

\vspace{2mm}

For $\alpha\ge 0$, let $\Ha(\alpha)$ be the set of all hyperplanes through $0$ that are parallel to some hyperplane in $A(P,\alpha)$. Let
$$ M(\alpha) := \Rd \setminus \bigcup_{H\in\Ha(\alpha)} H.$$
The set $M(\alpha)$ is open and is a cone, that is, if $x\in M(\alpha)$, then $\lambda x\in M(\alpha)$ for all $\lambda>0$. Trivially, there exists a line $G$ through $0$ satisfying
$$ G\setminus\{0\}\subset M(0).$$
It follows easily that we can decrease $\varepsilon>0$, if necessary, such that
$$ G\setminus\{0\}\subset M(\varepsilon).$$
Let $t\in G$, $t\not=0$. Since $M(\varepsilon)$ is a cone and since also $-t\in G$, we can choose $\mu>0$ so large that the polytopes $P$ and $P+\mu t$ are $\varepsilon$-disentangled (note that the distance of a hyperplane in $A(P,\varepsilon)$ from the parallel hyperplane in $\Ha(\varepsilon)$ is bounded by some constant depending only on $P$ and $\varepsilon$). After this choice, we write $t$ instead of $\mu t$, so that now $P$ and $P+t$ are $\varepsilon$-disentangled. As is clear from the definitions, any two polytopes $P+t_1$ and $P+t_1+\lambda t$ with $\lambda\ge 1$ are $\varepsilon$-disentangled. 

Let
$$ C(P,\varepsilon):= \Ha(P_{\eta})\setminus A(P,\varepsilon).$$
Thus, $C(P,\varepsilon)$ is the set of hyperplanes that meet the parallel body $P_\eta$, but are not $\varepsilon$-close to some facet of $P$.

\vspace{2mm}

\noindent{\bf Definition.}  $E(P,\varepsilon)$ is the event that
$$ |\widehat X\cap A_j(P,\varepsilon)|=1\mbox{ for }j=1,\dots,m, \; |\widehat X\cap C(P,\varepsilon)|=0.$$

Suppose that the event $E(P,\varepsilon)$ occurs.  Then for each $j\in \{1,\dots,m\}$, there is precisely one hyperplane $H_j$ of $\widehat X$ in the set $A_j(P,\varepsilon)$. The hyperplanes $H_1,\dots,H_m$ are the facet hyperplanes of a polytope $Q$, which is $\varepsilon$-close to $P$. In the event $E(P,\varepsilon)$, there is no hyperplane of $\widehat X$ in the set $C(P,\varepsilon)$, hence no hyperplane of $\widehat X$ different from $H_1,\dots,H_m$ meets $P_{\eta}$ (which contains $Q$). Therefore, $Q$ is a cell of the mosaic $X$. Thus, if $E(P,\varepsilon)$ occurs, then some cell of $X$ is $\varepsilon$-close to $P$.

Our choice of $\varepsilon$ implies, in particular, that the sets $A_1(P,\varepsilon),\dots,A_m(P,\varepsilon), C(P,\varepsilon)$ are pairwise disjoint, hence the restrictions 
$$\widehat X\cap A_1(P,\varepsilon),\dots,\widehat X\cap A_m(P,\varepsilon),\widehat X\cap C(P,\varepsilon)$$ 
are stochastically independent. It follows that
\begin{align*}
&\bP(E(P,\varepsilon))\\
&= \bP\{ |\widehat X\cap C(P,\varepsilon))|=0\} \prod_{j=1}^m \bP\{|\widehat X\cap A_j(P,\varepsilon)|=1\}\\
&= e^{-\widehat\Theta(C(P,\varepsilon))}\prod_{j=1}^m e^{-\widehat \Theta(A_j(P,\varepsilon))}\widehat\Theta(A_j(P,\varepsilon)).
\end{align*}
From (\ref{2.1}) and Assumption (A), we immediately obtain that $\widehat \Theta(A_j(P,\varepsilon))>0$ for all $j$.
We have obtained  that
\begin{equation}\label{3.0} 
p(\varepsilon) := \bP(E(P,\varepsilon)) >0.
\end{equation}

Above, we have found a vector $t\in\Rd$ such that $P+t_1$ and $P+t_1+\lambda t$ are $\varepsilon$-disentangled for any $t_1\in\Rd$ and any $\lambda\ge 1$. With numbers $\lambda_1,\lambda_2,\ldots\ge 1$ to be determined later, we now define recursively
$$ P_1 = P, \quad P_{n+1} = P_n +\lambda_n t \quad\mbox{for }n\in\N.$$
Any two polytopes of the set $\{P_n:n\in\N\}$ are $\varepsilon$-disentangled.

\vspace{2mm}

\noindent{\bf Definition.} For $i\in\N$, let $E_i$ denote the event that
$$ |\widehat X\cap A_j(P_i,\varepsilon)|=1\mbox{ for }j=1,\dots,m,\; |\widehat X\cap C(P_i,\varepsilon)|=0.$$

Since the events $E_i,E_k$ with $i\not= k$ are not independent, we modify this. Let
$$ \Ha_i=\Ha((P_i)_{\eta}), $$
the set of hyperplanes meeting $(P_i)_\eta$.

\vspace{2mm}

\noindent{\bf Definition.} For $i\not= k$, let $E_{ik}$ denote the event that 
$$ |\widehat X\cap A_j(P_i,\varepsilon)|=1\mbox{ for }j=1,\dots,m,\; |\widehat X\cap C(P_i,\varepsilon)\cap \Ha_k^c|=0,$$
where $\Ha_k^c:=\Ha\setminus \Ha_k$, and let $H_{ik}$ be the event that
$$|\widehat X\cap \Ha_i\cap \Ha_k|=0.$$

If the event $E_{ik}$ occurs, then the mosaic induced by $\widehat X \setminus(\Ha_i\cap \Ha_k)$ has a cell that is $\varepsilon$-close to $P_i$. The event $H_{ik}$ ensures that $\widehat X$ has no hyperplanes in $\Ha_i\cap\Ha_k$. Thus, in this case some cell of the mosaic $X$ is $\varepsilon$-close to $P_i$. Clearly we have $E_{ik}\cap H_{ik}\subset E_i\subset E_{ik}$.

Let $i<k$ be given. Since $E_i\subset E_{ik}$, 
\begin{equation}\label{3.1}
\bP(E_i)\le \bP(E_{ik}),\quad \bP(E_k)\le \bP(E_{ki})
\end{equation}
and
\begin{equation}\label{3.2}
\bP(E_i\cap E_k)\le \bP(E_{ik}\cap E_{ki}).
\end{equation}
Since the sets $\Ha_i\cap \Ha_k^c$ and $\Ha_k\cap \Ha_i^c$ are disjoint, the events $E_{ik}$ and $E_{ki}$ are independent, hence 
\begin{equation}\label{3.3}
\bP(E_{ik}\cap E_{ki}) = \bP(E_{ik})\bP(E_{ki}).
\end{equation}
Because $E_{ik}\cap H_{ik} \subset E_i$ and the events $E_{ik}$ and $H_{ik}$ are independent, we have
\begin{equation}\label{3.4}
\bP(E_{ik})\bP(H_{ik}) \le \bP(E_i),\quad \bP(E_{ki})\bP(H_{ik}) \le \bP(E_k).
\end{equation}
Further, $E_{ik}\cap E_{ki}\cap H_{ik}\subset E_i\cap E_k$, and the events $E_{ik}\cap E_{ki}$ and $H_{ik}$ are independent, hence
\begin{equation}\label{3.5}
\bP(E_{ik}\cap E_{ki}) \bP(H_{ik})\le \bP(E_i\cap E_k).
\end{equation}
From (\ref{3.2}), (\ref{3.4}), (\ref{3.3}) we get
$$ \bP(E_i\cap E_k)-\bP(E_i)\bP(E_k) \le \bP(E_{ik})\bP(E_{ki})[1-\bP(H_{ik})^2] \le 1-\bP(H_{ik})^2,$$
and from (\ref{3.5}), (\ref{3.1}), (\ref{3.3}),
$$ -[\bP(E_i\cap E_k)-\bP(E_i)\bP(E_k)] \le \bP(E_{ik})\bP(E_{ki})[1-\bP(H_{ik})] \le 1-\bP(H_{ik}).$$

By choosing $\lambda_i$ sufficiently large, we can make $\bP(H_{ik})$ arbitrarily close to $1$. In fact, we have
$$ \bP (H_{ik}) = e^{-\widehat\Theta(\Ha_i\cap\Ha_k)}$$
and, by (\ref{2.1}), 
$$ \widehat \Theta(\Ha_i\cap\Ha_k) =\widehat\gamma \int_{\Sd} \int_{-\infty}^\infty {\bf 1}_{\Ha_i\cap\Ha_k}(u^\perp+\tau u)\,\D\tau\,\widehat\varphi(\D u).$$
The inner integral is bounded by the diameter of $P_{\eta}$. The outer integral extends in effect only over a neighborhood of the great subsphere $t^\perp\cap\Sd$, and for $\lambda_i\to\infty$, these neighborhoods shrink to $t^\perp\cap \Sd$. By our Assumption (B), $\widehat\varphi(t^\perp\cap \Sd)=0$. This gives $\widehat\Theta(\Ha_i\cap\Ha_k)\to 0$ for $\lambda_i\to\infty$.

Therefore, we can assume that
$$ |\bP(E_i\cap E_k)-\bP(E_i)\bP(E_k)| \le 1/i^2$$
for all $i$ and all $k>i$. This gives
$$ \sum_{i\not=k}^n|\bP(E_i\cap E_k)-\bP(E_i)\bP(E_k)|\le cn$$
with a constant $c$ independent of $n$. Since 
$$ \sum_{i=1}^n\bP(E_i)>p(\varepsilon)n$$
with $p(\varepsilon)>0$ (which follows from (\ref{3.0}) and stationarity), Lemma \ref{Lem3.1} gives $\bP(\limsup_{n\to\infty} E_n)=1$. Thus, with probability one, infinitely many events $E_n$ occur. But if an event $E_n$ occurs, then there is a cell in $X$ (contained in $(P_n)_\eta$) such that a translate of it is $\varepsilon$-close to $P$. Since $\varepsilon\le\varepsilon_0$, this completes the proof of Lemma \ref{Lem3.3}.
\end{proof}

Now we are in a position to finish the proofs of our theorems.

\begin{proof}[Proof of Theorem \ref{Thm1}]
The set of all $d$-polytopes in $\Rd$ whose vertices have rational coordinates is countable and hence can be ordered as a sequence $Q_1,Q_2,\dots$ This sequence is dense in $\Kd$ with respect to the Hausdorff metric. 

We choose a sequence $(\eta_k)_{k\in\N}$ of positive numbers with $\lim_{k\to\infty}\eta_k=0$. By Lemma \ref{Lem3.2}, to each $k$ we can choose a number $\varepsilon_k>0$ such that $A_1(Q_k,\varepsilon_k), \dots, A_{m_k}(Q_k,\varepsilon_k)$ (where $m_k$ is the facet number of $Q_k$) are pairwise disjoint and that every polytope $C$ that is $\varepsilon_k$-close to $Q_k$ satisfies $\delta(Q_k,C)\le \eta_k$.

Let $k\in\N$. By Lemma \ref{Lem3.3}, with probability one there is a cell $C_k$ in $X$ such that a translate $C_k+t_k$ is $\varepsilon_k$-close to $Q_k$, in particular, $\delta(Q_k,C_k+t_k)\le \eta_k$. Since the intersection of countably many events of probability one still has probability one, then with probability one for each $k\in\N$ there is a cell $C_k$ in $X$ with $\delta(Q_k,C_k+t_k)\le\eta_k$ for suitable $t_k$. 

Consequently, the following holds with probability one. Let $K\in\Kd$. There is a subsequence $(Q_{k_r})_{r\in\N}$ of the dense sequence $(Q_k)_{k\in\N}$ that converges to $K$. To each $r$, there is a translated cell $C_{k_r}+t_{k_r}$ with $C_{k_r}\in X$ and $t_{k_r}\in\Rd$ such that $\delta(Q_{k_r},C_{k_r}+t_{k_r})\le\eta_{k_r}$. Then also the sequence $(C_{k_r}+t_{k_r})_{r\in\N}$ converges to $K$. Since $K\in\Kd$ was arbitrary, this shows that the translates of the cells of $X$ are dense in $\Kd$.
\end{proof}

\begin{proof}[Proof of Theorem \ref{Thm2}]
There are only countably many combinatorial isomorphism types of simple $d$-polytopes. Therefore, we can choose a sequence $(Q_k)_{k\in\N}$ of simple $d$-polytopes which represent all these combinatorial types. 

Let $k\in\N$. Since $Q_k$ is simple, there is a number $\varepsilon>0$ such that every polytope that is $\varepsilon$-close to $Q_k$ must be combinatorially isomorphic to $Q_k$. By Lemma \ref{Lem3.3}, there are infinitely many cells in $X$ such that for each of these cells a translate is $\varepsilon$-close to $Q_k$, hence the cell is combinatorially isomorphic to $Q_k$. Since this holds for each $k$ with probability one, it holds with probability one simultaneously for all $k$.
\end{proof}

\begin{proof}[Proof of Theorem \ref{Thm3}] Let $\emptyset\not= B\subset\Kd_0$ be an open set. There is a polytope $P\in B$, and we can choose a number $\eta>0$ such that the $\eta$-neighborhood of $P$ with respect to the Hausdorff metric is contained in $B$. 

The center function $c$ is continuous at $P$, hence there exists a number $\varepsilon_1>0$ such that $\|c(Q)\|=\|c(Q)-c(P)\| \le\eta/2$ if $\delta(Q,P)< \varepsilon_1$. By Lemma \ref{Lem3.2}, there exists a number $\varepsilon_2>0$ such that every polytope $Q$ that is $\varepsilon_2$-close to $P$ satisfies $\delta(Q,P) \le \min\{\varepsilon_1,\eta/2\}$. Let $\varepsilon := \min\{\varepsilon_1,\varepsilon_2\}$.

As shown in the proof of Lemma \ref{Lem3.3}, with probability $p(\varepsilon)>0$ (see (\ref{3.0})) the mosaic $X$ contains a cell $C$ that is $\varepsilon$-close to $P$ (the proof uses only Assumption (A)). This cell satisfies $\delta(C,P) \le\eta/2$ and $\delta(C,P)\le \varepsilon_1$, and the latter gives $\|c(C)\|\le \eta/2$. Then we have $C-c(C)\in\Kd_0$ and $\delta(C-c(C),P) \le \delta(C,P) + \|c(C)\| \le \eta$ and thus $C-c(C)\in B$.

Define
$$ A:= \{K\in\Kd: c(K)\in P_\eta\mbox{ and }K-c(K)\in B\}.$$

On one hand, we have 
$$\Theta^{(d)}(A) = \bE|X\cap A| \ge p(\varepsilon)>0,$$
since with probability $p(\varepsilon)$, there is a cell $C$ of $X$ that is $\varepsilon$-close to $P$ and thus satisfies $C\subset P_\eta$ and hence $c(C)\in P_\eta$, and moreover $C-c(C)\in B$. 

On the other hand, by (\ref{2.2}),
\begin{align*}
\Theta^{(d)}(A) &= \gamma^{(d)} \int_{\Kd_0} \int_{\Rd} {\bf 1}_A(C+x)\,\lambda(\D x)\,\bQ^{(d)}(d C)\\
&= \gamma^{(d)} \lambda(P_\eta) \int_{\Kd_0} {\bf 1}_B(C)\,\bQ^{(d)}(d C) = \gamma^{(d)} \lambda(P_\eta) \bQ^{(d)}(B).
\end{align*}
Both results together show that $\bQ^{(d)}(B)>0$.
\end{proof}

\section{Poisson--Voronoi mosaics}

We remark that results analogous to Lemma \ref{Lem3.3} and hence to Theorems \ref{Thm1} and \ref{Thm2} hold also for stationary Poisson--Voronoi mosaics (for these, see \cite[Section 10.2]{SW08}, for example). They are easier to obtain, since there are no long-range dependences, so that the usual Borel--Cantelli lemma (for pairwise independent events) is sufficient and its generalization, Lemma \ref{Lem3.1}, is not needed. We sketch only the beginning of the proof.

Let $Y$ be a stationary Poisson point process in $\Rd$ with intensity $\gamma>0$. Let $P\subset \Rd$ be a polytope with $0$ as an interior point, and let $F_1,\dots,F_m$ be its facets. Let $p_1,\dots,p_m$ be the points obtained by reflecting the origin $p_0=0$ at each of the affine hulls of $F_1,\dots, F_m$. Then $P$ is a cell of the Voronoi diagram of the set $\{p_0,p_1,\dots,p_m\}$.

Let $\varepsilon>0$ be given. We can choose $\eta>0$ such that every polytope that is $\varepsilon$-close to $P$, in the sense defined in Section \ref{sec3}, is contained in $P_\eta$.
Next, we choose $\alpha>0$ so small that the Voronoi diagram of any set $\{q_0,\dots,q_m\}$ with $q_j\in B(p_j,\alpha)$, $j=0,\dots,m$, has a cell $Q$ that is $\varepsilon$-close to $P$. 
Further, there exists a number $\rho>0$ such that for any point $q\in\Rd\setminus \rho P_\eta$, the mid-hyperplane of $q$ and $0$ does not intersect $P_\eta$. Now we define
$$ C = \rho P_\eta \setminus \bigcup_{j=0}^m B(p_j,\alpha)$$
and let $E$ be the event that
$$ |Y\cap B(p_j,\eta)|= 1\mbox{ for } j=0,\dots,m,\enspace |Y\cap C|=0.$$
If the event $E$ occurs, then the Voronoi mosaic induced by $Y$ has a cell that is $\varepsilon$-close to $P$. This event has probability
$$ \bP(E)= e^{-\gamma \lambda(C)}\left[ e^{-\gamma\lambda(B(0,\eta))} \gamma\lambda(B(0,\eta))\right]^{m+1}>0.$$
The rest of the proof is left to the reader.

\noindent Authors' addresses:\\[2mm]
Matthias Reitzner\\Institut f\"ur Mathematik, Unversit\"at Osnabr\"uck\\
D-49076 Osnabr\"uck, Germany\\
E-mail: matthias.reitzner@uni-osnabrueck.de\\[2mm]
Rolf Schneider\\
Mathematisches Institut, Albert-Ludwigs-Universit{\"a}t\\
D-79104 Freiburg i. Br., Germany\\
E-mail: rolf.schneider@math.uni-freiburg.de


\begin{thebibliography}{10}

\bibitem{Ren66} R\'{e}nyi, A., {\em Wahrscheinlichkeitsrechnung}, 3rd edn., VEB Deutsch. Verl. d. Wiss., Berlin, 1971.

\bibitem{ER59} Erd\"os, P., R\'{e}nyi, A., On Cantor's series with convergent $\sum 1/q_n$. {\em Ann. Univ. Sci. Budapest. E\"otv\"os. Sect. Math.} {\bf 2} (1959).

\bibitem{SW08} Schneider, R., Weil, W., {\em Stochastic and Integral Geometry.} Springer, Berlin, 2008.

\end{thebibliography}
\end{document}